\documentclass[12pt, reqno, fleqn]{amsart}
\usepackage{amsmath, amsthm, amscd, amsfonts, amssymb, graphicx, color}

\input{mathrsfs.sty}

\textheight 22.5truecm \textwidth 15 truecm
\setlength{\oddsidemargin}{0.35in}\setlength{\evensidemargin}{0.35in}

\setlength{\topmargin}{-.5cm}

\newtheorem{theorem}{Theorem}[section]
\newtheorem{lemma}[theorem]{Lemma}

\newtheorem{corollary}[theorem]{Corollary}
\theoremstyle{definition}

\theoremstyle{remark}
\newtheorem{remark}[theorem]{Remark}
\numberwithin{equation}{section}
\begin{document}

\title [Interpolating operator Jensen-type inequalities]{Interpolating operator Jensen-type inequalities for log-convex and superquadratic functions}

\author[M. Bakherad, M. Kian, M. Krni\' c, S.A. Ahmadi]{Mojtaba Bakherad$^1$, Mohsen Kian$^2$, Mario Krni\' c$^3$ and Seyyed Alireza Ahmadi$^4$}

\address{$^1$ Department of Mathematics, Faculty of Mathematics, University of Sistan and Baluchestan, Zahedan, Iran.}
\email{mojtaba.bakherad@yahoo.com; bakherad@member.ams.org}
\address{$^{2}$ Department of Mathematics, Faculty of Basic Sciences, University
of Bojnord, P. O. Box 1339, Bojnord 94531, Iran}
\email{kian@ub.ac.ir and kian@member.ams.org}
\address{$^{3}$ University of Zagreb, Faculty of Electrical Engineering and Computing, Unska 3, 10000 Zagreb, Croatia.}
\email{mario.krnic@fer.hr}

\address{$^4$ Department of Mathematics, Faculty of Mathematics, University of Sistan and Baluchestan, Zahedan, Iran.}
\email{sa.ahmadi@math.usb.ac.ir
}

\subjclass[2010]{Primary 47A63, Secondary  26D15.}

\keywords{}

\begin{abstract}
Motivated by some recently established operator Jensen-type inequalities related to a usual convexity, in the present paper we derive several more accurate operator Jensen-type inequalities for certain subclasses of convex functions. More precisely, we obtain interpolating series of Jensen-type inequalities for log-convex and non-negative superquadratic functions. In particular, we obtain the corresponding refinements of the Jensen-Mercer operator inequality for such classes of functions.
\end{abstract} \maketitle

\section{Introduction}
Let ${\mathbb B}({\mathscr H})$ be the $C^*$-algebra of all
bounded linear operators on a complex Hilbert space ${\mathscr H}$ with an identity $I$, and let ${\mathbb B}({\mathscr H})_h$ stands
for the real subspace of ${\mathbb B}({\mathscr H})$ consisting of all self-adjoint operators on $\mathscr H$.
An operator $A\in{\mathbb B}({\mathscr H})_h$ is called positive
if $\langle Ax,x\rangle\geq0$ for all $x\in{\mathscr H }$, and we then write $A\geq0$. In addition, we write $A>0$ if $A$ is
a positive invertible operator.  For
 $A, B\in{\mathbb B}({\mathscr H})_h$ we say
$B\geq A$ if $B-A\geq0$. A map $\Phi$ on ${\mathbb B}({\mathscr H})$ is said to be positive if $\Phi(A)\geq 0$ for each $A\geq 0$ and is called unital if $\Phi(I)=I$.

 The continuous functional calculus is based on the  Gelfand map $f\mapsto f(A)$ which is a
 $*$-isometric isomorphism between the $C^*$-algebra
$C(\rm{sp}(A))$ of all complex-valued continuous functions acting on the spectrum $\rm{sp}(A)$
of a self-adjoint operator $A$ and the $C^*$-algebra generated by $I$ and $A$. The following order preserving property is a consequence of the continuous functional calculus: If $f, g\in C({\rm sp}(A))$, then
$f(t)\geq g(t)$, $t\in{\rm sp}(A)$, implies that $f(A)\geq g(A)$.

 A function $f:J\rightarrow \mathbb{R}$  is convex if
 \begin{eqnarray}\label{definicija_konveksne}f(\alpha x+(1-\alpha)y)\leq \alpha f(x)+ ({1-\alpha})f(y)
 \end{eqnarray} for all $\alpha\in[0,1]$ and all  $x,y\in J$. On the other hand, if $\alpha\not\in[0,1]$ and $x,y\in J$ such that $\alpha x+(1-\alpha)y\in J$, then the last inequality takes form
 \begin{eqnarray}\label{appear}
 \alpha f(x) +({1-\alpha})f(y)\leq f(\alpha x+(1-\alpha)y).
\end{eqnarray}
The relation \eqref{definicija_konveksne} is the most simplest form of the Jensen inequality.

In this article we deal with operator inequalities of the Jensen-type. One of the most famous operator forms of the Jensen inequality is the Davis-Choi-Jensen inequality which is related to operator convexity. Recall that a continuous function $f:J\rightarrow \mathbb{R}$ is operator convex if $f(\alpha A+(1-\alpha)B) \leq \alpha f(A)+(1-\alpha)f(B)$ holds for all $A, B\in {\mathbb B}({\mathscr H})_h$ with spectra in $J$ and all $\alpha\in [0,1]$.
Then, the Davis-Choi-Jensen inequality asserts that if $f:J\rightarrow \mathbb{R}$ is operator convex, then $f(\Phi(A))\leq \Phi(f(A))$ holds for any unital positive linear map $\Phi$ on ${\mathbb B}({\mathscr H})$
and any $A\in {\mathbb B}({\mathscr H})_h$ with spectrum contained in $J$. For some other versions of the Jensen operator inequality related to operator convexity, the reader is referred to monograph \cite{abc} and references therein.

On the other hand, considerable attention is also paid to operator Jensen-type inequalities referring to a mere convexity. Among them, the Jensen-Mercer inequality (see \cite{mat})
asserts that if $f:[m,M]\rightarrow \mathbb{R}$ is a convex function and $\Phi_1, \Phi_2,\ldots , \Phi_n$ are positive linear maps on ${\mathbb B}({\mathscr H})$ with $\sum_{i=1}^n\Phi_i(I)=I$, then
\begin{eqnarray}\label{jenmer}
f\left(M+m-\sum_{i=1}^n\Phi_i(A_i)\right)\leq f(M)+f(m)-\sum_{i=1}^n\Phi_i(f(A_i))
\end{eqnarray}
holds for all $A_1, A_2, \ldots, A_n\in{\mathbb B}({\mathscr H})_h$ with spectra contained in the interval $[m,M]$.

Recently, Moslehian \emph{et.al.} \cite{mos}, showed that if $f:J\rightarrow \mathbb{R}$ is a continuous convex function and  $\Phi$
is a unital positive linear map on ${\mathbb B}({\mathscr H})$, then
 \begin{eqnarray}\label{mos}
f(\Phi(B))+f(\Phi(C))\leq \Phi(f(A))+\Phi (f(D))
\end{eqnarray}
holds for $A, B, C, D\in{\mathbb B}({\mathscr H})_h$ with spectra
contained in $J$ such that $A + D = B + C$ and $A \leq m \leq B, C \leq M \leq D$ for two real numbers $m < M$.
In particular, it has also been showed in \cite{mos} that the Jensen-Mercer inequality \eqref{jenmer} is a consequence of the Jensen-type inequality \eqref{mos}. It should be noticed here that some improvements  of operator relations
\eqref{jenmer} and \eqref{mos} have been established in  \cite{mos2}.

The main objective of the present paper is to derive refinements of operator inequalities \eqref{jenmer} and \eqref{mos} for certain subclasses of convex functions. More precisely, we are going to establish refinements of these inequalities for $\log$-convex and non-negative superquadratic functions. The paper is divided into three sections as follows: after this Introduction, in Section \ref{sec2} we give improved versions of \eqref{jenmer} and \eqref{mos} for $\log$-convex functions, while in Section \ref{sec3} we derive the corresponding results for superquadratic functions. The established Jensen-type relations will be given as interpolating series of inequalities.

\section{Interpolating Jensen-type inequalities for log-convex functions}\label{sec2}

 Recall that a  non-negative function $f:J\rightarrow \mathbb{R}$  is called $\log$-convex if $\log f (t)$ is a convex function, that is, if holds the relation
 $f(\alpha x+(1-\alpha)y)\leq f(x)^\alpha f(y)^{1-\alpha}$,
for every $x,y\in J$ and $\alpha\in[0,1]$. Our main goal in this section is to refine operator inequality \eqref{mos}  for $\log$-convex functions.
 To do this, we will first establish several auxiliary scalar relations for such a class of functions.

It is easy to see that every $\log$-convex function $f:J\rightarrow \mathbb{R}$ is also convex. Namely, by virtue of the Young inequality we have
\begin{eqnarray}\label{log}
f(\alpha x+(1-\alpha)y)\leq f(x)^\alpha f(y)^{1-\alpha}\leq\alpha f(x)+(1-\alpha) f(y),
\end{eqnarray}
for $\alpha\in[0,1]$ and $x,y\in J$, which implies the usual convexity. Moreover, if $\alpha\not\in[0,1]$  and  $\alpha x+(1-\alpha)y\in J$, then the signs of inequalities in \eqref{log} are reversed, that is, we have
\begin{align}\label{log1}
f(\alpha x+(1-\alpha)y)\geq f(x)^\alpha f(y)^{1-\alpha}\geq\alpha f(x)+(1-\alpha) f(y).
\end{align}
Clearly, the first inequality in \eqref{log1} follows from convexity of $\log f$  and inequality \eqref{appear}. Moreover, the second inequality in \eqref{log1} follows from reverse of the Young inequality which asserts that
$\alpha a+(1-\alpha) b\leq a^\alpha b^{1-\alpha}$ holds for positive numbers $a$, $b$ and for real number $\alpha$ not belonging to the unit interval $[0,1]$ (for more details, see \cite{bakh}).

Now, we give improved forms of  left inequalities in \eqref{log} and \eqref{log1}.
\begin{lemma}\label{lemica}
Let $f:J\rightarrow \mathbb{R}$ be  $\log$-convex function, let  $x,y\in J$, and let $r(\alpha)=\min\{\alpha,1-\alpha\}$,  where $\alpha\in \mathbb{R}$.
 If $\alpha\in[0,1]$ and $K_f(x,y)=\frac{f^2\left(\frac{x+y}{2}\right)}{f(x)f(y)}$, then
\begin{align}\label{prva}
 f(\alpha x+(1-\alpha)y)\leq K_f(x,y)^{r(\alpha)} f(x)^\alpha f(y)^{1-\alpha}\leq \alpha f(x)+(1-\alpha)f(y).
 \end{align}
 In addition, if $\alpha\not\in[0,1]$  and $\alpha x+(1-\alpha)y\in J$, then the inequality signs in \eqref{prva} are reversed.
 \end{lemma}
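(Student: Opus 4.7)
The plan is to handle both inequalities in \eqref{prva} via a single midpoint decomposition of $\alpha x + (1-\alpha) y$, splitting on whether $\alpha \leq \tfrac{1}{2}$ or $\alpha \geq \tfrac{1}{2}$. Writing $m = \tfrac{x+y}{2}$, I would use
\[
\alpha x + (1-\alpha) y = (2\alpha)\,m + (1-2\alpha)\,y \qquad \text{when } 0 \leq \alpha \leq \tfrac{1}{2},
\]
and the symmetric identity
\[
\alpha x + (1-\alpha) y = (2\alpha - 1)\,x + (2-2\alpha)\,m \qquad \text{when } \tfrac{1}{2} \leq \alpha \leq 1.
\]
In each case the new weights lie in $[0,1]$, and $r(\alpha)$ is exactly half the coefficient of $m$.

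For the first inequality in \eqref{prva}, I would apply log-convexity of $f$ to whichever decomposition is relevant. In the case $\alpha \leq \tfrac{1}{2}$ this yields
\[
f(\alpha x + (1-\alpha) y) \leq f(m)^{2\alpha} f(y)^{1-2\alpha},
\]
and multiplying and dividing the right-hand side by $f(x)^\alpha f(y)^\alpha$ rewrites it as $K_f(x,y)^{\alpha} f(x)^\alpha f(y)^{1-\alpha} = K_f(x,y)^{r(\alpha)} f(x)^\alpha f(y)^{1-\alpha}$. The case $\alpha \geq \tfrac{1}{2}$ is symmetric under the swap $(x,y,\alpha)\leftrightarrow(y,x,1-\alpha)$ and uses the second decomposition.

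For the second inequality, I would start again from $f(m)^{2\alpha} f(y)^{1-2\alpha}$ (in the case $\alpha \leq \tfrac{1}{2}$) and apply the scalar Young inequality---legitimate since $2\alpha \in [0,1]$---to obtain the bound $2\alpha\, f(m) + (1-2\alpha)\, f(y)$. Using convexity of $f$, which is implied by log-convexity via \eqref{log}, in the form $f(m) \leq \tfrac{f(x) + f(y)}{2}$ and simplifying collapses the result to exactly $\alpha f(x) + (1-\alpha) f(y)$.

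For the reversed statement when $\alpha \notin [0,1]$ and $\alpha x + (1-\alpha) y \in J$, the same two decompositions still apply formally, but the weights $2\alpha, 1-2\alpha$ (or $2\alpha-1, 2-2\alpha$) now fall outside $[0,1]$. I would therefore invoke the reverse log-convex inequality and the reverse Young inequality, both recorded in \eqref{log1}, in place of the forward versions, while continuing to use the midpoint bound $f(m) \leq \tfrac{f(x)+f(y)}{2}$ since $m$ remains a genuine convex combination of $x$ and $y$. The one point that must be tracked carefully is that in this reversed case the midpoint estimate is multiplied by a now-negative coefficient ($1 - \alpha < 0$ if $\alpha > 1$, or $\alpha < 0$ if $\alpha < 0$), which is precisely what flips the direction of the final chain so that it runs the right way. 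This sign bookkeeping is the only mildly delicate part of an otherwise algebraic argument; I do not anticipate a genuine obstacle.
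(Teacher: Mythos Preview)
Your argument for the left inequality in \eqref{prva} is identical to the paper's: the same midpoint decomposition, the same case split at $\alpha=\tfrac12$, and the same algebraic identification of $f(m)^{2\alpha}f(y)^{1-2\alpha}$ with $K_f(x,y)^{r(\alpha)}f(x)^\alpha f(y)^{1-\alpha}$. The reversed case for $\alpha\notin[0,1]$ is also handled as in the paper, via \eqref{log1} applied to the same decompositions.

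The one point of divergence is the right inequality. You run Young's inequality on $f(m)^{2\alpha}f(y)^{1-2\alpha}$ and then absorb the midpoint term using $f(m)\le\tfrac12(f(x)+f(y))$, tracking the sign of the coefficient in the reversed regime. This is correct, but the paper takes a shorter route: since $f(m)^2\le f(x)f(y)$ by log-convexity, one has $K_f(x,y)\le 1$, and as $r(\alpha)\ge 0$ for $\alpha\in[0,1]$ this gives $K_f(x,y)^{r(\alpha)}\le 1$; the right inequality then follows immediately from the ordinary Young bound $f(x)^\alpha f(y)^{1-\alpha}\le\alpha f(x)+(1-\alpha)f(y)$. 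In the reversed case $r(\alpha)<0$ forces $K_f(x,y)^{r(\alpha)}\ge 1$, and the reverse Young inequality finishes. Your approach buys nothing extra and requires the sign bookkeeping you flagged; the paper's observation that $K_f\le 1$ avoids that entirely.
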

\begin{proof}
We first show the left inequality in \eqref{prva}. To do this, we consider two cases depending on weather $\alpha\in[0,\frac{1}{2}]$ or $\alpha\in[\frac{1}{2},1]$.
If $\alpha\in[0,\frac{1}{2}]$, then
\begin{align}\label{alpha}
 f(\alpha x+(1-\alpha)y)\nonumber&
 =f\left(2\alpha \left(\frac{x+y}{2}\right)+(1-2\alpha)y\right)\nonumber\\&
 \leq f\left(\frac{x+y}{2}\right)^{2\alpha}f(y)^{(1-2\alpha)}\nonumber\quad\textrm{(by inequality \eqref{log})}\\&=
 K_f(x,y)^\alpha f(x)^\alpha f(y)^{1-\alpha}.
\end{align}
Further, if $\alpha\in[\frac{1}{2},1]$, then $1-\alpha\in[0,\frac{1}{2}]$, so that
\begin{align}\label{alpha1}
 f(\alpha x+(1-\alpha)y)\nonumber&
 =f\left( (1-2(1-\alpha))x+2(1-\alpha)\left(\frac{x+y}{2}\right)\right)\nonumber\\&
 \leq f(x)^{1-2(1-\alpha)}f\left(\frac{x+y}{2}\right)^{2(1-\alpha)}\quad\textrm{(by inequality \eqref{log})}\nonumber\\&=
 K_f(x,y)^{1-\alpha} f(x)^\alpha f(y)^{1-\alpha}.
 \end{align}
 Therefore, by virtue of   \eqref{alpha} and
 \eqref{alpha1} we have
\begin{align*}
 f(\alpha x+(1-\alpha)y\leq
 K_f(x,y)^{r(\alpha)} f(x)^\alpha f(y)^{1-\alpha},
\end{align*}
as required. The right inequality in \eqref{prva} holds by $\log$-convexity of $f$. Namely,
since $f^2\left(\frac{x+y}{2}\right)\leq{f(x)f(y)}$, it follows that $K_f(x,y)^{r(\alpha)}\leq 1$, so the right inequality in \eqref{prva} is weaker than the corresponding inequality in \eqref{log}.

It remains to prove the corresponding relations with  reversed signs of inequalities. Let $\alpha\not\in[0,1]$ such that $\alpha x+(1-\alpha)y\in J$.  Following the lines as in \eqref{alpha} and \eqref{alpha1}, and utilizing
\eqref{log1} instead of \eqref{log}, we have
  \begin{align*}
 f(\alpha x+(1-\alpha)y)\geq  K_f(x,y)^\alpha f(x)^\alpha f(y)^{1-\alpha}, \quad \alpha<0,
\end{align*}
 and
\begin{align*}
 f(\alpha x+(1-\alpha)y)\geq  K_f(x,y)^{1-\alpha} f(x)^\alpha f(y)^{1-\alpha}, \quad \alpha>1,
\end{align*}
 that is, we have
\begin{align*}
 f(\alpha x+(1-\alpha)y)\geq  K_f(x,y)^{r(\alpha)} f(x)^\alpha f(y)^{1-\alpha}.
\end{align*}
Finally, the second inequality in \eqref{alpha} with the reversed sign of inequality holds do to the fact that $ K_f(x,y)^{r(\alpha)}\geq 1$ for $r(\alpha)<0$.
\end{proof}

\begin{remark}
It should be noticed here that the first inequality in \eqref{prva} can be established as a consequence of superadditivity of the Jensen functional. For more details, the reader is referred to \cite{analele}.
In addition, some results related to Lemma \ref{lemica}  can be found in \cite{sab1}.
\end{remark}

Now,  due to Lemma \ref{lemica}, we
 are ready to prove the first result in this section.

\begin{theorem}\label{main111}
Let $f:J\rightarrow \mathbb{R}$ be  continuous $\log$-convex function and  let $A, B, C, D\in{\mathbb B}({\mathscr H})_h$ be operators with spectra
contained in $J$ such  that $A \leq m \leq B, C \leq M \leq D$ for  real numbers $m \leq M$.
If one of the following conditions
\begin{align*}
&({\rm i})\,\, B + C \leq A +D\quad \textrm{and}\quad f(m)\leq f (M)
\\&({\rm ii})\, A + D \leq B +C \quad\textrm{and}\quad f(M) \leq f (m)
\end{align*}
is satisfied, then
\begin{align*}
 f(B)&+f(C)\\&\leq K_f(m,M)^{\widetilde{B}}f(m)^{\frac{M-B}{M-m}}f(M)^{\frac{B-m}{M-m}}+K_f(m,M)^{\widetilde{C}}f(m)^{\frac{M-C}{M-m}}f(M)^{\frac{C-m}{M-m}}
 \\&\leq \frac{2M -B-C}{M-m}f(m)+{\frac{B+C-2m}{M-m}}f(M)
   \\
 &\leq K_f(m,M)^{\widetilde{A}}f(m)^{\frac{M-A}{M-m}}f(M)^{\frac{A-m}{M-m}}+K_f(m,M)^{\widetilde{D}}f(m)^{\frac{M-D}{M-m}}f(M)^{\frac{D-m}{M-m}}
 \\&\leq f(A)+f(D),
\end{align*}
where $\widetilde{t}=\frac{1}{2}-\frac{1}{M-m}\left|t-\frac{m+M}{2}\right|$ and $K_f(m,M)$ is defined in Lemma \ref{lemica}.
\end{theorem}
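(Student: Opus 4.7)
The plan is to lift Lemma \ref{lemica} (with $x=m$, $y=M$) to the operator setting by continuous functional calculus, apply it to each of $A,B,C,D$, and stitch the four resulting estimates together using the hypothesis. The key observation is that, writing $t=\alpha m+(1-\alpha)M$ with $\alpha(t)=\frac{M-t}{M-m}$, a direct computation gives $r(\alpha(t))=\widetilde t$ both when $t\in[m,M]$ and when $t\in J\setminus[m,M]$, so that $\widetilde t$ serves as a unified exponent encoding both the forward and reversed cases of Lemma \ref{lemica}.

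First I would record the pointwise scalar chain
\begin{equation*}
f(t)\le K_f(m,M)^{\widetilde t}\,f(m)^{\frac{M-t}{M-m}}f(M)^{\frac{t-m}{M-m}}\le \frac{M-t}{M-m}f(m)+\frac{t-m}{M-m}f(M),\qquad t\in[m,M],
\end{equation*}
together with the analogous reversed chain for $t\in J\setminus[m,M]$. Since $\mathrm{sp}(B),\mathrm{sp}(C)\subseteq[m,M]$, the order-preserving property of the functional calculus applied to the forward chain, followed by addition in $B$ and $C$, yields the first two inequalities of the theorem. Similarly, since $\mathrm{sp}(A)\subseteq J\cap(-\infty,m]$ and $\mathrm{sp}(D)\subseteq J\cap[M,+\infty)$, the reversed chain applied to $A$ and $D$ and summed produces the last two inequalities.

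Writing $g(t):=\frac{M-t}{M-m}f(m)+\frac{t-m}{M-m}f(M)$, the reversed chain for $A$ and $D$ delivers
\begin{equation*}
K_f(m,M)^{\widetilde A}f(m)^{\frac{M-A}{M-m}}f(M)^{\frac{A-m}{M-m}}+K_f(m,M)^{\widetilde D}f(m)^{\frac{M-D}{M-m}}f(M)^{\frac{D-m}{M-m}}\ge g(A)+g(D),
\end{equation*}
so the middle inequality will follow by transitivity once I verify $g(B)+g(C)\le g(A)+g(D)$. A direct computation yields
\begin{equation*}
\bigl(g(A)+g(D)\bigr)-\bigl(g(B)+g(C)\bigr)=\frac{(A+D)-(B+C)}{M-m}\bigl(f(M)-f(m)\bigr),
\end{equation*}
which is a positive operator under hypothesis (i) (both factors nonnegative) or (ii) (both factors nonpositive). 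I expect no significant obstacle beyond the bookkeeping of signs and the identification $r(\alpha(t))=\widetilde t$ across the two regimes; once that is in place, each remaining step is a direct application of the order-preserving functional calculus.
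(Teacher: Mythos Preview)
Your proposal is correct and follows essentially the same route as the paper's proof: both identify $r\bigl(\tfrac{M-t}{M-m}\bigr)=\widetilde t$, lift the scalar chain from Lemma \ref{lemica} (forward on $[m,M]$, reversed outside) to $B,C$ and $A,D$ via the functional calculus, and then bridge the middle inequality by computing the difference of the linear interpolants as $\tfrac{f(M)-f(m)}{M-m}\bigl((A+D)-(B+C)\bigr)$, which is $\ge 0$ under either hypothesis (i) or (ii).
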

\begin{proof}
First, it should be noticed here that  $$r\left(\frac{M-t}{M-m}\right)=\min\left\{\frac{M-t}{M-m},\frac{t-m}{M-m}\right\}=\frac{1}{2}-\frac{1}{M-m}\left|t-\frac{m+M}{2}\right|=\widetilde{t}.$$
Further, by virtue of  Lemma \ref{lemica}, it follows that
\begin{align}\label{help0}
 f(t)&=f\left(\frac{M-t}{M-m}m+\frac{t-m}{M-m}M\right)\nonumber\\&\leq K_f(m,M)^{\widetilde{t}}f(m)^{\frac{M-t}{M-m}}f(M)^{\frac{t-m}{M-m}}\nonumber\\
 &\leq \frac{M-t}{M-m}f(m)+\frac{t-m}{M-m}f(M)\nonumber \\
 &= \frac{f(M)-f(m)}{M-m}t+\frac{f(m)M-f(M)m}{M-m}
\end{align}
holds for $t\in[m,M]$,
while for $t\in J\setminus[m,M]$ the inequality signs in \eqref{help0} are reversed.
Now, since  $A \leq m \leq B, C \leq M \leq D$, applying the functional calculus to the above scalar relations yields
\begin{align}\label{help1}
 f(B)&\leq  K_f(m,M)^{\widetilde{B}}f(m)^{\frac{M-B}{M-m}}f(M)^{\frac{B-m}{M-m}}\nonumber\\
 &\leq \frac{M-B}{M-m}f(m)+\frac{B-m}{M-m}f(M)\nonumber \\
 &= \frac{f(M)-f(m)}{M-m}B+\frac{f(m)M-f(M)m}{M-m},
\end{align}
\begin{align}\label{help2}
 f(C)&\leq  K_f(m,M)^{\widetilde{C}}f(m)^{\frac{M-C}{M-m}}f(M)^{\frac{C-m}{M-m}}\nonumber\\
 &\leq \frac{M-C}{M-m}f(m)+\frac{C-m}{M-m}f(M)\nonumber \\
 &= \frac{f(M)-f(m)}{M-m}C+\frac{f(m)M-f(M)m}{M-m},
\end{align}
\begin{align*}
 f(A)&\geq  K_f(m,M)^{\widetilde{A}}f(m)^{\frac{M-A}{M-m}}f(M)^{\frac{A-m}{M-m}}\nonumber\\&\geq \frac{f(M)-f(m)}{M-m}A+\frac{f(m)M-f(M)m}{M-m},
\end{align*}
and
\begin{align*}
 f(D)&\geq K_f(m,M)^{\widetilde{D}}f(m)^{\frac{M-D}{M-m}}f(M)^{\frac{D-m}{M-m}}\nonumber\\&\geq \frac{f(M)-f(m)}{M-m}D+\frac{f(m)M-f(M)m}{M-m}.
\end{align*}
Therefore, if one of  conditions (i) or (ii) in the statement of Theorem is fulfilled, we have
\begin{align*}
 &f(B)+f(C)\\&\leq K_f(m,M)^{\widetilde{B}}f(m)^{\frac{M-B}{M-m}}f(M)^{\frac{B-m}{M-m}}+K_f(m,M)^{\widetilde{C}}f(m)^{\frac{M-C}{M-m}}f(M)^{\frac{C-m}{M-m}}
 \\
 &\leq \frac{2M -B-C}{M-m}f(m)+{\frac{B+C-2m}{M-m}}f(M)
   \\
 &= \frac{f(M)-f(m)}{M-m}(B+C)+2\frac{f(m)M-f(M)m}{M-m}
 \\&\leq\frac{f(M)-f(m)}{M-m}(A+D)+2\frac{f(m)M-f(M)m}{M-m}
 \\&\leq K_f(m,M)^{\widetilde{A}}f(m)^{\frac{M-A}{M-m}}f(M)^{\frac{A-m}{M-m}}+K_f(m,M)^{\widetilde{D}}f(m)^{\frac{M-D}{M-m}}f(M)^{\frac{D-m}{M-m}}
 \\&\leq f(A)+f(D),
\end{align*}
which completes the proof.
\end{proof}
Our first application of  Theorem \ref{main111} refers to a power function $f:\mathbb{R}_+\rightarrow \mathbb{R}$ defined by $f(x)=x^{p}$, $p\leq 0$.
Obviously, $f$ is $\log$-convex and we have the following consequence.

\begin{corollary}\label{main1111}
Let $A, B, C, D\in{\mathbb B}({\mathscr H})_h$ be positive invertible  such  that $0<A \leq m \leq B, C \leq M \leq D$ for  non-negative real numbers $m \leq M$.
If $A +D\leq B + C$ and $p\leq0$,
 then
\begin{align*}
 B^p&+C^p\\&\leq\Big( \frac{m+M}{2\sqrt{mM}} \Big)^{2p\widetilde{B}}m^{p(\frac{M-B}{M-m})}M^{p(\frac{B-m}{M-m})}+\Big( \frac{m+M}{2\sqrt{mM}} \Big)^{2p\widetilde{C}}m^{p(\frac{M-C}{M-m})}M^{p(\frac{C-m}{M-m})}
 \\
 &\leq \frac{2M -B-C}{M-m}m^{p}+{\frac{B+C-2m}{M-m}}M^{p}
   \\
 &\leq \Big( \frac{m+M}{2\sqrt{mM}} \Big)^{2p\widetilde{A}}m^{p(\frac{M-A}{M-m})}M^{p(\frac{A-m}{M-m})}+\Big( \frac{m+M}{2\sqrt{mM}} \Big)^{2p\widetilde{D}}m^{p(\frac{M-D}{M-m})}M^{p(\frac{D-m}{M-m})}
 \\&\leq A^p+D^p,
\end{align*}
where $\widetilde{t}$ is defined in the statement of Theorem \ref{main111}.
\begin{remark}\label{midpoint1}
Let $m$ and $M$ be real numbers such that $m<M$. In \cite{mos}, the authors defined the subset $\Omega$ of ${\mathbb B}({\mathscr H})_h\times {\mathbb B}({\mathscr H})_h$ by $\Omega =\left\{ (A,B); A \leq m \leq \frac{A+B}{2} \leq M \leq B \right\}$. Now, if $f:J\rightarrow \mathbb{R}$ is a  continuous $\log$-convex function and $(A,D)\in\Omega$ with spectra contained in $J$, we obtain the Jensen-type relation
\begin{align*}
&f\Big(\frac{A+D}{2}\Big) \\&\quad\leq K_f(m,M)^{\widetilde{\frac{A+D}{2}}}f(m)^{\frac{2M-A-D}{2(M-m)}}f(M)^{\frac{A+D-2m}{2(M-m)}}
 \\&\quad\leq \frac{2M -A-D}{2(M-m)}f(m)+{\frac{A+D-2m}{2(M-m)}}f(M)
   \\
 &\quad\leq \frac{1}{2}K_f(m,M)^{\widetilde{A}}f(m)^{\frac{M-A}{M-m}}f(M)^{\frac{A-m}{M-m}}+\frac{1}{2}K_f(m,M)^{\widetilde{D}}f(m)^{\frac{M-D}{M-m}}f(M)^{\frac{D-m}{M-m}}
 \\&\quad\leq \frac{1}{2}f(A)+\frac{1}{2}f(D).
\end{align*}
Clearly, this follows by putting $B=C=\frac{A+D}{2}$ in Theorem \ref{main111}.
\end{remark}

\end{corollary}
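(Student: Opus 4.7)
The plan is to apply Theorem \ref{main111} directly to the power function $f(x)=x^p$ with $p\leq 0$ on $J=(0,\infty)$. The argument rests on three preliminary verifications: that $f$ is log-convex, that hypothesis (ii) of the theorem is the applicable one, and that $K_f(m,M)$ admits the claimed closed form. No essentially new argument is needed; the corollary is purely a specialization.

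First, I would verify log-convexity. Since $\log f(x)=p\log x$ and $\log$ is concave on $\mathbb{R}_+$, multiplication by the non-positive factor $p$ produces a convex function, so $f$ is log-convex on $(0,\infty)$. Next, I would select the correct hypothesis. Because $p\leq 0$ together with $0<m\leq M$ implies $M^p\leq m^p$, i.e.\ $f(M)\leq f(m)$, and the standing assumption $A+D\leq B+C$ is in force, hypothesis (ii) of Theorem \ref{main111} holds. It is precisely here that one must resist the temptation to invoke hypothesis (i): the two inequalities there would contradict one another when $p<0$.

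The remaining routine step is to compute $K_f(m,M)$ explicitly. One has
\begin{equation*}
K_f(m,M)=\frac{f^2\left(\frac{m+M}{2}\right)}{f(m)f(M)}=\frac{\left(\frac{m+M}{2}\right)^{2p}}{(mM)^p}=\left(\frac{m+M}{2\sqrt{mM}}\right)^{2p},
\end{equation*}
whence $K_f(m,M)^{\widetilde{t}}=\left(\frac{m+M}{2\sqrt{mM}}\right)^{2p\widetilde{t}}$ and $f(m)^{\frac{M-t}{M-m}}f(M)^{\frac{t-m}{M-m}}=m^{p\frac{M-t}{M-m}}M^{p\frac{t-m}{M-m}}$ for each $t\in\{A,B,C,D\}$. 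Substituting these expressions into the four-step chain of inequalities supplied by Theorem \ref{main111} yields the announced result.

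I do not anticipate any genuine obstacle: the proof amounts to bookkeeping. The only point requiring care is picking hypothesis (ii) rather than (i), since the power function is decreasing for negative exponents, and this dictates which of the two asymmetric hypotheses is compatible with the given operator ordering $A+D\leq B+C$.
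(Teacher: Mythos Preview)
Your proof is correct and follows exactly the approach indicated in the paper, which simply states that the corollary is the specialization of Theorem \ref{main111} to the $\log$-convex function $f(x)=x^{p}$, $p\leq 0$, without writing out the details. Your explicit verification that hypothesis~(ii) is the relevant one and your computation of $K_f(m,M)=\bigl(\frac{m+M}{2\sqrt{mM}}\bigr)^{2p}$ are precisely the bookkeeping steps the paper leaves implicit.
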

Our next intention is to establish a variant of Theorem \ref{main111} containing unital positive linear mappings. The following result provides the interpolating series of inequalities for  relation \eqref{mos},
established in \cite{mos}.

\begin{theorem}\label{main11111}
Let $f:J\rightarrow \mathbb{R}$ be  continuous $\log$-convex function and let $A, B, C, D\in{\mathbb B}({\mathscr H})_h$ be  operators with spectra
contained in $J$ such that $A + D = B + C$ and $A \leq m \leq B, C \leq M \leq D$ for  real numbers $m < M$. If $\Phi$
is a unital positive linear map on ${\mathbb B}({\mathscr H})$, then
\begin{align*}
 \Phi(f(B))&+\Phi(f(C))\\&\leq  \Phi\left(K_f(m,M)^{\widetilde{B}}f(m)^{\frac{M-B}{M-m}}f(M)^{\frac{B-m}{M-m}}\right)
 \\&\quad+\Phi\left(K_f(m,M)^{\widetilde{C}}f(m)^{\frac{M-C}{M-m}}f(M)^{\frac{C-m}{M-m}}\right)\\&\leq \frac{2M -\Phi(B+C)}{M-m}f(m)+{\frac{\Phi(B+C)-2m}{M-m}}f(M)
   \\&\leq K_f(m,M)^{\widetilde{\Phi(A)}}f(m)^{\frac{M-\Phi(A)}{M-m}}f(M)^{\frac{\Phi(A)-m}{M-m}}
 \\&\quad+K_f(m,M)^{\widetilde{\Phi(D)}}f(m)^{\frac{M-\Phi(D)}{M-m}}f(M)^{\frac{\Phi(D)-m}{M-m}}
 \\&\leq f(\Phi(A))+f(\Phi(D)),
\end{align*}
where $\widetilde{t}=\frac{1}{2}-\frac{1}{M-m}\left|t-\frac{m+M}{2}\right|$ and $K_f(m,M)$ is defined in Lemma \ref{lemica}.
\end{theorem}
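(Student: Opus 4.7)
The plan is to mirror the proof of Theorem \ref{main111}, replacing the ``addition of operator inequalities'' step by ``application of $\Phi$ followed by addition,'' and using the hypothesis $A+D=B+C$ as the bridge between the $B,C$-branch and the $A,D$-branch. The engine is again the scalar chain \eqref{help0} coming from Lemma \ref{lemica}: forward for $t\in[m,M]$, reversed for $t$ outside $[m,M]$.

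First I would apply the functional calculus to \eqref{help0} for $B$ and $C$, whose spectra lie in $[m,M]$; this yields the operator inequalities \eqref{help1} and \eqref{help2} exactly as in Theorem \ref{main111}. Applying the positive linear unital map $\Phi$ and summing, the rightmost affine-in-$B$ (resp.\ $C$) expressions become affine in $\Phi(B)$ (resp.\ $\Phi(C)$), producing the first two inequalities of the theorem and collapsing the right end into
\[
\frac{f(M)-f(m)}{M-m}\,\Phi(B+C)+2\,\frac{f(m)M-f(M)m}{M-m}\,I.
\]
Invoking now the equality $A+D=B+C$, so that $\Phi(A)+\Phi(D)=\Phi(B)+\Phi(C)$, I may replace $\Phi(B+C)$ by $\Phi(A)+\Phi(D)$ in the displayed expression; this is the bridge between the two branches.

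For the last two inequalities, the key observation is that the operator order is preserved by $\Phi$: from $A\leq mI$ and $D\geq MI$ we get $\Phi(A)\leq mI$ and $\Phi(D)\geq MI$. Hence the \emph{reversed} form of the scalar chain \eqref{help0} applies to $\Phi(A)$ and to $\Phi(D)$ via the functional calculus taken directly on these two operators (not on $A$ and $D$), and summing the two reversed chains gives the remaining two inequalities, closing the proof.

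The main (and really only) obstacle is bookkeeping: keeping track of inequality directions across the two branches and interpreting the mixed operator expressions $K_f(m,M)^{\widetilde{\Phi(A)}}f(m)^{(M-\Phi(A))/(M-m)}f(M)^{(\Phi(A)-m)/(M-m)}$ through the continuous functional calculus. Since $K_f(m,M)\leq 1$ by log-convexity and $t\mapsto\widetilde{t}$ is continuous, each of these three commuting factors is a well-defined continuous function of a single self-adjoint operator, so no analytic issue arises and the whole argument reduces to spectral-theoretic bookkeeping plus one application of $\Phi$.
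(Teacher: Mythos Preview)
Your proposal is correct and follows essentially the same route as the paper: apply $\Phi$ to the operator inequalities \eqref{help1}, \eqref{help2}, use $A+D=B+C$ to swap $\Phi(B+C)$ for $\Phi(A)+\Phi(D)$, and then apply the reversed scalar chain \eqref{help0} directly to $\Phi(A)$ and $\Phi(D)$ (using $\Phi(A)\leq m$, $\Phi(D)\geq M$) via the functional calculus. The paper does exactly this; your bookkeeping remarks are accurate, though the observation $K_f(m,M)\leq 1$ is not needed for well-definedness of the functional calculus, only positivity of the bases.
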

\begin{proof}
Applying the positive linear map $\Phi$ to relations \eqref{help1} and \eqref{help2},  we have
\begin{align*}
 \Phi(f(B))&\leq  \Phi\left(K_f(m,M)^{\widetilde{B}}f(m)^{\frac{M-B}{M-m}}f(M)^{\frac{B-m}{M-m}}\right)\nonumber\\&\leq \frac{M-\Phi(B)}{M-m}f(m)+{\frac{\Phi(B)-m}{M-m}}f(M)
\end{align*}
and
\begin{align*}
 \Phi(f(C))&\leq  \Phi\left(K_f(m,M)^{\widetilde{C}}f(m)^{\frac{M-C}{M-m}}f(M)^{\frac{C-m}{M-m}}\right)\nonumber\\&\leq \frac{M-\Phi(C)}{M-m}f(m)+{\frac{\Phi(C)-m}{M-m}}f(M).
\end{align*}
Moreover, since $\Phi(A)\leq m$ and $M\leq \Phi(D)$, utilizing the scalar relation \eqref{help0} with reversed signs of inequalities, we have
\begin{align*}
 f(\Phi(A))&\geq  K_f(m,M)^{\widetilde{\Phi(A)}}f(m)^{\frac{M-\Phi(A)}{M-m}}f(M)^{\frac{\Phi(A)-m}{M-m}}\nonumber\\&\geq \frac{M-\Phi(A)}{M-m}f(m)+{\frac{\Phi(A)-m}{M-m}}f(M)
\end{align*}
and
\begin{align*}
 f(\Phi(D))&\geq K_f(m,M)^{\widetilde{\Phi(D)}}f(m)^{\frac{M-\Phi(D)}{M-m}}f(M)^{\frac{\Phi(D)-m}{M-m}}\nonumber\\&\geq \frac{M-\Phi(D)}{M-m}f(m)+{\frac{\Phi(D)-m}{M-m}}f(M).
\end{align*}
Now, adding relations for $\Phi(f(B))$ and $\Phi(f(C))$,  taking into account that $B+C=A+D$, and utilizing estimates for $f(\Phi(A))$ and $f(\Phi(D))$, we have
\begin{align*}
 \Phi(f(B))&+\Phi(f(C))\\&\leq  \Phi\left(K_f(m,M)^{\widetilde{B}}f(m)^{\frac{M-B}{M-m}}f(M)^{\frac{B-m}{M-m}}\right)
 \\&\quad+\Phi\left(K_f(m,M)^{\widetilde{C}}f(m)^{\frac{M-C}{M-m}}f(M)^{\frac{C-m}{M-m}}\right)\\&\leq \frac{2M -\Phi(B+C)}{M-m}f(m)+{\frac{\Phi(B+C)-2m}{M-m}}f(M)
 \\&=\frac{2M-\Phi(A+D)}{M-m}f(m)+{\frac{\Phi(A+D)-2m}{M-m}}f(M)
 \\&\leq\frac{M-\Phi(A)}{M-m}f(m)+{\frac{\Phi(A)-m}{M-m}}f(M)
 \\&\quad+\frac{M-\Phi(D)}{M-m}f(m)+{\frac{\Phi(D)-m}{M-m}}f(M)
 \\&\leq K_f(m,M)^{\widetilde{\Phi(A)}}f(m)^{\frac{M-\Phi(A)}{M-m}}f(M)^{\frac{\Phi(A)-m}{M-m}}
 \\&\quad+K_f(m,M)^{\widetilde{\Phi(D)}}f(m)^{\frac{M-\Phi(D)}{M-m}}f(M)^{\frac{\Phi(D)-m}{M-m}}
 \\&\leq f(\Phi(A))+f(\Phi(D)),
\end{align*}
which completes the proof.
\end{proof}
\begin{remark}\label{slicno}
Suppose that  the assumptions of Theorem \ref{main11111} are fulfilled. Then, according to the same argument as in the proof of Theorem \ref{main11111}, we easily get the following interpolating series of inequalities:
\begin{align*}
 f(\Phi(B))&+f(\Phi(C))\\&\leq  K_f(m,M)^{\widetilde{\Phi(B)}}f(m)^{\frac{M-\Phi(B)}{M-m}}f(M)^{\frac{\Phi(B)-m}{M-m}}
 \\&\quad+K_f(m,M)^{\widetilde{\Phi(C)}}f(m)^{\frac{M-\Phi(C)}{M-m}}f(M)^{\frac{\Phi(C)-m}{M-m}}\\&\leq \frac{2M -\Phi(B+C)}{M-m}f(m)+{\frac{\Phi(B+C)-2m}{M-m}}f(M)
   \\&\leq\Phi\left(K_f(m,M)^{\widetilde{A}}f(m)^{\frac{M-A}{M-m}}f(M)^{\frac{A-m}{M-m}}\right)
 \\&\quad+\Phi\left(K_f(m,M)^{\widetilde{D}}f(m)^{\frac{M-D}{M-m}}f(M)^{\frac{D-m}{M-m}}\right)
      \\&\leq \Phi(f(A))+\Phi(f(D))
\end{align*}
and
\begin{align*}
 \Phi(f(B))&+f(\Phi(C))\\&\leq  \Phi\left(K_f(m,M)^{\widetilde{B}}f(m)^{\frac{M-B}{M-m}}f(M)^{\frac{B-m}{M-m}}\right)
 \\&\quad+K_f(m,M)^{\widetilde{\Phi(C)}}f(m)^{\frac{M-\Phi(C)}{M-m}}f(M)^{\frac{\Phi(C)-m}{M-m}}\\&\leq \frac{2M -\Phi(B+C)}{M-m}f(m)+{\frac{\Phi(B+C)-2m}{M-m}}f(M)
    \\&\leq K_f(m,M)^{\widetilde{\Phi(A)}}f(m)^{\frac{M-\Phi(A)}{M-m}}f(M)^{\frac{\Phi(A)-m}{M-m}}
 \\&\quad+\Phi\left(K_f(m,M)^{\widetilde{D}}f(m)^{\frac{M-D}{M-m}}f(M)^{\frac{D-m}{M-m}}\right)
  \\&\leq f(\Phi(A))+\Phi(f(D)).
\end{align*}
\end{remark}
Our next intention is to establish interpolating series for the Jensen-Mercer inequality  \eqref{jenmer}. To do this, we give  a multidimensional version of the interpolating series from the previous remark.

\begin{corollary}\label{main21}
Let $f:J\rightarrow \mathbb{R}$ be  continuous $\log$-convex function. Let $A_i, B_i, C_i, D_i\in {\mathbb B}({\mathscr H})_h$ be  operators with spectra contained in $J$ such that $A_i + D_i = B_i + C_i$ and $A_i \leq m \leq B_i, C_i \leq M \leq D_i$, $i=1,2,\ldots ,n$, for  real numbers $m < M$.  If $\Phi_1,\Phi_2,\cdots,\Phi_n$ are positive linear maps on ${\mathbb B}({\mathscr H})$ satisfying
$\sum_{i=1}^{n}\Phi_i(I)=I$, then
\begin{align*}
 \sum_{i=1}^{n}&\Phi_i(f(B_i))+f\left(\sum_{i=1}^{n}\Phi_i(C_i)\right)\nonumber
 \\&\leq  \sum_{i=1}^{n}\Phi_i\left(K_f(m,M)^{\widetilde{B_i}}f(m)^{\frac{M-B_i}{M-m}}f(M)^{\frac{B_i-m}{M-m}}\right)\nonumber
 \\&\quad+K_f(m,M)^{\widetilde{\sum_{i=1}^{n}\Phi_i(C_i)}}f(m)^{\frac{M-\sum_{i=1}^{n}\Phi_i(C_i)}{M-m}}f(M)^{\frac{\sum_{i=1}^{n}\Phi_i(C_i)-m}{M-m}}\nonumber\\&\leq \frac{2M -\sum_{i=1}^{n}\Phi_i(B_i+C_i)}{M-m}f(m)+{\frac{\sum_{i=1}^{n}\Phi_i(B_i+C_i)-2m}{M-m}}f(M)\nonumber
    \\&\leq K_f(m,M)^{\widetilde{\sum_{i=1}^{n}\Phi_i(A_i)}}f(m)^{\frac{M-\sum_{i=1}^{n}\Phi_i(A_i)}{M-m}}f(M)^{\frac{\sum_{i=1}^{n}\Phi_i(A_i)-m}{M-m}}\nonumber
 \\&\quad+\sum_{i=1}^{n}\Phi_i\left(K_f(m,M)^{\widetilde{D_i}}f(m)^{\frac{M-D_i}{M-m}}f(M)^{\frac{D_i-m}{M-m}}\right)\nonumber
  \\&\leq f\left(\sum_{i=1}^{n}\Phi_i(A_i)\right)+\sum_{i=1}^{n}\Phi_i(f(D_i)).
\end{align*}
\end{corollary}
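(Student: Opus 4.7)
The plan is to transport the single-map argument of Theorem \ref{main11111} to the multi-map setting, observing that the mixed placement of $f$ and $\sum_{i=1}^{n} \Phi_i$ on the two sides of the claimed chain is precisely the pattern of the second chain in Remark \ref{slicno}, only now with $\Phi$ replaced by the aggregate $\sum_{i=1}^{n} \Phi_i$ and the individual arguments indexed by $i$.

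First I would apply the scalar chain \eqref{help0} in its forward direction via the functional calculus to each $B_i$ (whose spectrum lies in $[m,M]$), then apply the positive linear map $\Phi_i$ to the resulting operator inequality, and finally sum over $i$. Using $\sum_{i=1}^{n} \Phi_i(I) = I$ this yields
\begin{align*}
\sum_{i=1}^{n} \Phi_i(f(B_i))
&\leq \sum_{i=1}^{n} \Phi_i\!\left(K_f(m,M)^{\widetilde{B_i}}\,f(m)^{\frac{M-B_i}{M-m}}\,f(M)^{\frac{B_i-m}{M-m}}\right) \\
&\leq \frac{M-\sum_{i=1}^{n}\Phi_i(B_i)}{M-m}\,f(m) + \frac{\sum_{i=1}^{n}\Phi_i(B_i)-m}{M-m}\,f(M).
\end{align*}
For the operator $T=\sum_{i=1}^{n}\Phi_i(C_i)$ the order of operations is opposite: one first forms $T$, notes that $mI\leq T\leq MI$ since each $C_i$ has spectrum in $[m,M]$, and then applies \eqref{help0} to $T$ through its own functional calculus to obtain the analogous chain for $f(T)$. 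Adding these two delivers the first two inequalities of the corollary, with a middle expression involving $\sum_{i=1}^{n}\Phi_i(B_i+C_i)$.

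The symmetric half of the chain, bounding $f(\sum_{i=1}^{n}\Phi_i(A_i))+\sum_{i=1}^{n}\Phi_i(f(D_i))$ from below, is produced the same way but with the reversed form of \eqref{help0}, since each $D_i$ satisfies $D_i\geq M$ and $S=\sum_{i=1}^{n}\Phi_i(A_i)$ satisfies $S\leq m$. The two halves glue along their common middle line because $\sum_{i=1}^{n}\Phi_i(B_i+C_i)=\sum_{i=1}^{n}\Phi_i(A_i+D_i)$ by the hypothesis $A_i+D_i=B_i+C_i$. The main bookkeeping obstacle is tracking which chains must be reversed (those built from operators with spectrum outside $[m,M]$, namely $A_i$, $D_i$, and $S$) versus which stay forward ($B_i$, $C_i$, and $T$); once the directions are correctly assigned, positivity of each $\Phi_i$ together with the fact that $K_f(m,M)^{\widetilde{\,\cdot\,}}\leq 1$ on $[m,M]$ and $\geq 1$ outside $[m,M]$ ensures that the $K_f$-weighted estimates really do refine both the linear upper bound for $B_i,C_i,T$ and the linear lower bound for $A_i,D_i,S$.
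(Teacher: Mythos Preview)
Your proposal is correct and matches the paper's approach: the corollary is stated without an explicit proof, being introduced as ``a multidimensional version of the interpolating series from the previous remark'' (Remark~\ref{slicno}), and your argument is precisely that extension---applying the forward form of \eqref{help0} to each $B_i$ and to $T=\sum_i\Phi_i(C_i)$, the reversed form to each $D_i$ and to $S=\sum_i\Phi_i(A_i)$, and gluing via $A_i+D_i=B_i+C_i$, just as in the proof of Theorem~\ref{main11111}.
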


\begin{remark}
Suppose that  $B_i\in {\mathbb B}({\mathscr H})_h$, $i=1,2,\ldots , n$, are   operators with spectra contained in $[m,M]$, and let
$\Phi_i$, $i=1,2,\ldots , n$,  be
positive linear maps on ${\mathbb B}({\mathscr H})$ such that $\sum_{i=1}^n\Phi_i(I)=I$. Then, Corollary \ref{main21} provides interpolating series of the Jensen-Mercer inequality for
 continuous  $\log$-convex function on $[m,M]$. To see this, we first define $C_i=M+m-B_i$ $i=1,2,\ldots , n$. Clearly, $m\leq C_i\leq M$ and $B_i+C_i=M+m$. Now,
  setting  $A_i=mI$ and $D_i=MI$, $i=1,2,\ldots , n$, Corollary \ref{main21} reduces to
{\small\begin{align*}
 &\sum_{i=1}^{n}\Phi_i(f(B_i))+f\left(M+m-\sum_{i=1}^{n}\Phi_i(B_i)\right)
 \\&\leq  \sum_{i=1}^{n}\Phi_i\left(K_f(m,M)^{\widetilde{B_i}}f(m)^{\frac{M-B_i}{M-m}}f(M)^{\frac{B_i-m}{M-m}}\right)
 \\&\quad+K_f(m,M)^{\widetilde{M+m-\sum_{i=1}^{n}\Phi_i(B_i)}}f(m)^{\frac{m-\sum_{i=1}^{n}\Phi_i(B_i)}{M-m}}f(M)^{\frac{M-\sum_{i=1}^{n}\Phi_i(B_i)}{M-m}}\\&\leq f\left(m\right)+f(M),
\end{align*}}
since $\sum_{i=1}^{n}\Phi_i(B_i+C_i)=\sum_{i=1}^{n}\Phi_i((m+M)I)=m+M$ and $\widetilde{m}=\widetilde{M}=0$. Clearly, this yields a term that interpolates between the left-hand side and the right-hand side of the Jensen-Mercer inequality \eqref{jenmer}.
\end{remark}

\section{Employing superquadraticity}\label{sec3}

Our goal in this section is to derive analogues of the results from the previous section for superquadratic functions.

For the reader's convenience, we  recall the definition  and several basic properties of superquadratic functions, introduced by Abramovich \emph{et.al.} \cite{AJS}
(for more details, see also  \cite{AJS1}). A function $f: [0,\infty)\rightarrow \mathbb{R}$  is called superquadratic provided that for each
 $s
\geq0$ there exists a
constant $c_s\in \mathbb{R}$ such that
\begin{align}\label{super}
f (t)-f (s)- f (|t-s|)\geq   c_s(t - s)
\end{align}
holds for all $t\geq0$. Superquadratic functions are closely connected to convex
functions. In particular, at the first glance, condition (\ref{super})
seems to be stronger than convexity. However, if $f$ takes negative
values, it may be considerably weaker. Just to see how poorly
behaved superquadratic functions can be, we note that any
function $f$ with values in the closed interval $[-2,-1]$  is
superquadratic, since in this case the left-hand side of (\ref{super}) is non-negative, so we can put $c_s=0$.
On the other hand, non-negative superquadratic
functions are much better behaved, namely, they are convex.

A common example of a superquadratic function is a power function. Namely, it has been showed in \cite{AJS} that a function
$f: [0,\infty)\rightarrow \mathbb{R}$ defined by $f(x)=x^{p}$ is superquadratic for $p\geq 2$.

One of the most useful characterizations of superquadratic functions is an extension of the Jensen inequality, given here in the most simplest form.
A function $f:[0,\infty)\rightarrow \mathbb{R}$  is superquadratic if and only if
\begin{align}\label{super2}
f(\alpha x+(1-\alpha)y)&\leq \alpha f(x)+(1-\alpha) f(y)\nonumber\\&\quad-\alpha f((1-\alpha)|x-y|)-(1-\alpha)f(\alpha|x-y|)
\end{align}
for every $x,y\in J$ and $\alpha\in[0,1]$. This relation will be the crucial point in this section.
For more generalized form of \eqref{super2} the reader is refered to \cite{AJS}.

Now, we are able to give the main result in this section, that is, we will establish an analogue of Theorem \ref{main11111} for superquadratic functions.
In the case of a non-negative superquadratic function, the following theorem provides  a refinement of  the Jensen-type inequality \eqref{mos}.

\begin{theorem}\label{main1}
Let $f:[0,\infty)\rightarrow \mathbb{R}$ be  continuous superquadratic function and let $A, B, C, D\in{\mathbb B}({\mathscr H})_h$ be  such that $A + D = B + C$ and $0\leq A \leq m \leq B, C \leq M \leq D$, for  non-negative real numbers $0\leq m < M$. If $\Phi$
is a unital positive linear map on ${\mathbb B}({\mathscr H})$, then
{\small\begin{align}\label{so1}
&f(\Phi(B))+f(\Phi(C))\nonumber\\&\leq
\Phi(f(A))+\Phi(f(D))-\frac{M-\Phi(B)}{M-m}f(\Phi(B)-m)-{\frac{\Phi(B)-m}{M-m}}f(M-\Phi(B))\nonumber\\&\quad
 -\frac{M-\Phi(C)}{M-m}f(\Phi(C)-m)-{\frac{\Phi(C)-m}{M-m}}f(M-\Phi(C))-\Phi(f(m-A))\nonumber
 \\&\quad-{\frac{m-\Phi(A)}{M-m}}f(M-m)-\Phi(f(D-M))-{\frac{\Phi(D)-M}{M-m}}f(M-m).
\end{align}}
\end{theorem}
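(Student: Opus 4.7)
My approach mirrors the strategy of the proof of Theorem \ref{main11111}, now using the superquadratic Jensen-type characterization \eqref{super2} in place of Lemma \ref{lemica}. The idea is to derive two scalar inequalities --- one valid on $[m,M]$ and one yielding a ``reverse'' estimate valid on $[0,m]$ and on $[M,\infty)$ --- then apply them via the continuous functional calculus and the positive linear map $\Phi$ to obtain operator bounds for $\Phi(B),\Phi(C)$ on the one hand, and for $A,D$ on the other, finally chaining the resulting inequalities using the hypothesis $A+D=B+C$.

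For $t\in[m,M]$, writing $t=\tfrac{M-t}{M-m}m+\tfrac{t-m}{M-m}M$ and invoking \eqref{super2} with $x=m$, $y=M$, $\alpha=\tfrac{M-t}{M-m}$, I obtain
$$f(t)\leq \tfrac{M-t}{M-m}f(m)+\tfrac{t-m}{M-m}f(M)-\tfrac{M-t}{M-m}f(t-m)-\tfrac{t-m}{M-m}f(M-t).$$
Since $m\leq \Phi(B),\Phi(C)\leq M$, functional calculus at $\Phi(B)$ and $\Phi(C)$, followed by addition, yields
$$f(\Phi(B))+f(\Phi(C))\leq \tfrac{2M-\Phi(B+C)}{M-m}f(m)+\tfrac{\Phi(B+C)-2m}{M-m}f(M)-K_1,$$
where $K_1$ collects the four $B,C$-correction terms $\tfrac{M-\Phi(B)}{M-m}f(\Phi(B)-m)$, $\tfrac{\Phi(B)-m}{M-m}f(M-\Phi(B))$, and their $C$-analogues appearing in \eqref{so1}.

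For the other side, I invoke \eqref{super2} with $m$ itself as the \emph{target} of a convex combination: writing $m=\tfrac{M-m}{M-t}\,t+\tfrac{m-t}{M-t}\,M$ for $t\in[0,m]$ and rearranging, I get
$$\tfrac{M-t}{M-m}f(m)+\tfrac{t-m}{M-m}f(M)\leq f(t)-f(m-t)-\tfrac{m-t}{M-m}f(M-m).$$
A symmetric argument, starting from $M=\tfrac{t-M}{t-m}\,m+\tfrac{M-m}{t-m}\,t$ for $t\geq M$, produces
$$\tfrac{M-t}{M-m}f(m)+\tfrac{t-m}{M-m}f(M)\leq f(t)-f(t-M)-\tfrac{t-M}{M-m}f(M-m).$$
Applying functional calculus to $A$ and $D$ (whose spectra lie in $[0,m]$ and $[M,\infty)$ respectively), then the positive linear map $\Phi$, and adding, bounds
$$\tfrac{2M-\Phi(A+D)}{M-m}f(m)+\tfrac{\Phi(A+D)-2m}{M-m}f(M)\leq \Phi(f(A))+\Phi(f(D))-K_2,$$
where $K_2$ is the sum of the four $A,D$-correction terms $\Phi(f(m-A))$, $\tfrac{m-\Phi(A)}{M-m}f(M-m)$, $\Phi(f(D-M))$, $\tfrac{\Phi(D)-M}{M-m}f(M-m)$ appearing in \eqref{so1}.

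Because $A+D=B+C$ forces $\Phi(A+D)=\Phi(B+C)$, the intermediate linear combinations of $f(m)$ and $f(M)$ in the two chains coincide, and concatenating them yields \eqref{so1} exactly. The delicate step is the rearrangement producing the two reverse-type scalar estimates: since \eqref{super2} is formulated only for $\alpha\in[0,1]$, I sidestep any ``$\alpha$ outside $[0,1]$'' variant by making $m$ (respectively $M$) the target of the convex combination, at the cost of dividing through by the coefficient $\tfrac{M-m}{M-t}$ (respectively $\tfrac{M-m}{t-m}$). The only real bookkeeping is verifying that the two superquadratic correction terms $f((1-\alpha)|x-y|)$ and $f(\alpha|x-y|)$ collapse precisely into $f(m-t)$, $f(M-t)$, $f(t-M)$ and $f(M-m)$ in the three configurations; once that is in hand, the rest is an operator-theoretic routine.
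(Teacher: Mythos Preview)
Your proof is correct and follows essentially the same route as the paper's own argument: the paper derives the identical three scalar inequalities (for $t\in[m,M]$, for $t<m$ via \eqref{super2} with $x=t$, $y=M$, $\alpha=\tfrac{M-m}{M-t}$, and for $t>M$ via $x=m$, $y=t$, $\alpha=\tfrac{t-M}{t-m}$), applies functional calculus and $\Phi$ in the same places, and chains through the coincident linear expression using $A+D=B+C$. Your phrasing of the reverse estimates as ``making $m$ (resp.\ $M$) the target of the convex combination'' is just a restatement of the paper's choice of $x,y,\alpha$, so there is no substantive difference.
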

\begin{proof}
We utilize characterization \eqref{super2} of a superquadratic function. Namely, if $t\in[m,M]$, then considering \eqref{super2} with $x=m$, $y=M$ and $\alpha=\frac{M-t}{M-m}$ provides
\begin{align*}
f(t)\leq \frac{M-t}{M-m}f(m)+{\frac{t-m}{M-m}}f(M)-\frac{M-t}{M-m}f(t-m)-{\frac{t-m}{M-m}}f(M-t).
\end{align*}
If $t<m$, then $m\in [t, M]$ so applying \eqref{super2} with $x=t$, $y=M$ and $\alpha=\frac{M-m}{M-t}$, it follows that
\begin{align*}
f(t)\geq \frac{M-t}{M-m}f(m)+{\frac{t-m}{M-m}}f(M)+f(m-t)+{\frac{m-t}{M-m}}f(M-m).
\end{align*}
Finally, if $t>M$, then $M\in[m,t]$, so putting $x=m$, $y=t$ and $\alpha=\frac{t-M}{t-m}$ in \eqref{super2} yields
\begin{align*}
f(t)\geq \frac{M-t}{M-m}f(m)+{\frac{t-m}{M-m}}f(M)+f(t-M)+{\frac{t-M}{M-m}}f(M-m).
\end{align*}
Now, since $m \leq \Phi(B), \Phi(C) \leq M$, applying functional calculus to the first scalar inequality, we have
\begin{align*}
f(\Phi(B))&\leq \frac{M-\Phi(B)}{M-m}f(m)+{\frac{\Phi(B)-m}{M-m}}f(M)\nonumber
\\&\quad-\frac{M-\Phi(B)}{M-m}f(\Phi(B)-m)-{\frac{\Phi(B)-m}{M-m}}f(M-\Phi(B))
 \end{align*}
and
\begin{align*}
f(\Phi(C))&\leq \frac{M-\Phi(C)}{M-m}f(m)+{\frac{\Phi(C)-m}{M-m}}f(M)\nonumber
\\&\quad-\frac{M-\Phi(C)}{M-m}f(\Phi(C)-m)-{\frac{\Phi(C)-m}{M-m}}f(M-\Phi(C)).
\end{align*}
Similarly, since $A\leq m$ and $M\leq D$, applying functional calculus to the remaing two scalar inequalities and then, taking into account linearity of $\Phi$, we obtain
\begin{align*}
\Phi(f(A))&\geq \frac{M-\Phi(A)}{M-m}f(m)+{\frac{\Phi(A)-m}{M-m}}f(M)\nonumber\\&\quad+\Phi(f(m-A))+{\frac{m-\Phi(A)}{M-m}}f(M-m)
\end{align*}
and
\begin{align*}
\Phi(f(D))&\geq \frac{M-\Phi(D)}{M-m}f(m)+{\frac{\Phi(D)-m}{M-m}}f(M)\nonumber\\&\quad+\Phi(f(D-M))+{\frac{\Phi(D)-M}{M-m}}f(M-m).
\end{align*}
Now, summing the estimates for $f(\Phi(B))$ and $f(\Phi(C))$, it follows that
\begin{align*}
&f(\Phi(B))+f(\Phi(C))\\&\leq L(\Phi(B))-\frac{M-\Phi(B)}{M-m}f(\Phi(B)-m)-{\frac{\Phi(B)-m}{M-m}}f(M-\Phi(B))\\&\quad+
 L(\Phi(C))-\frac{M-\Phi(C)}{M-m}f(\Phi(C)-m)-{\frac{\Phi(C)-m}{M-m}}f(M-\Phi(C)),
\end{align*}
where $L(t)=\frac{M-t}{M-m}f(m)+{\frac{t-m}{M-m}}f(M)$. In addition, since $A + D = B + C$, the right-hand side of the latter inequality can be rewritten as
\begin{align*}
&\Phi(L(A))+\Phi(L(D))-\frac{M-\Phi(B)}{M-m}f(\Phi(B)-m)-{\frac{\Phi(B)-m}{M-m}}f(M-\Phi(B))\\&\quad
 -\frac{M-\Phi(C)}{M-m}f(\Phi(C)-m)-{\frac{\Phi(C)-m}{M-m}}f(M-\Phi(C)),
\end{align*}due to linearity of $\Phi$ and $L$.
Finally, taking into account estimates for $\Phi(f(A))$ and $\Phi(f(D))$ we get \eqref{so1}, as claimed.
\end{proof}

Our next application of
Theorem \ref{main1} refers to power superquadratic functions. We have already discussed that a power function  $f(t)=t^p$, $t\geq 0$, is superquadratic for $p\geq 2$.
Applying the theorem to such a class of functions, we obtain the following result.

\begin{corollary}
Let $A, B, C, D\in{\mathbb B}({\mathscr H})_h$ be    such  that $0\leq A \leq m \leq B, C \leq M \leq D$, for  non-negative real numbers $m \leq M$,  and let $\Phi$
be a unital positive linear map on ${\mathbb B}({\mathscr H})$.
If $A +D= B + C$ and $p\geq2$,
 then
{\small\begin{align*}
\Phi&(B)^p+\Phi(C)^p\\&\quad+\frac{M-\Phi(B)}{M-m}(\Phi(B)-m)^p+{\frac{\Phi(B)-m}{M-m}}(M-t)^p\\&\quad
 +\frac{M-\Phi(C)}{M-m}(\Phi(C)-m)^p+{\frac{\Phi(C)-m}{M-m}}(M-t)^p\\&\leq
\Phi(A^p)+\Phi(D^p)-\Phi((m-A)^p)-{\frac{m-\Phi(A)}{M-m}}(M-m)^p
 \\&\quad-\Phi((D-M)^p)-{\frac{\Phi(D)-M}{M-m}}(M-m)^p.
\end{align*}}
\end{corollary}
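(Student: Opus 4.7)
The plan is to obtain this corollary as a direct specialization of Theorem \ref{main1} to the power function $f(t)=t^{p}$ on $[0,\infty)$ with $p\geq 2$. Since it was already recalled in this section (following \cite{AJS}) that such a power function is continuous and superquadratic on $[0,\infty)$, and moreover non-negative, all hypotheses of Theorem \ref{main1} are met once the operator conditions $A+D=B+C$ and $0\leq A\leq m\leq B,C\leq M\leq D$ are assumed together with the unital positive linear map $\Phi$.

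The first step is to verify that all quantities appearing on the right-hand side of \eqref{so1} remain meaningful when $f(t)=t^{p}$: we have $A\leq m$ so $m-A\geq 0$, $D\geq M$ so $D-M\geq 0$, $m\leq \Phi(B),\Phi(C)\leq M$ so $\Phi(B)-m,\Phi(C)-m,M-\Phi(B),M-\Phi(C)$ are all non-negative, and $M-m\geq 0$. Consequently each $(\,\cdot\,)^{p}$ is well-defined by the functional calculus on positive operators, and $f(M-m)=(M-m)^{p}$.

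The second step is simply to substitute $f(t)=t^p$ into inequality \eqref{so1}. The substitution produces
\begin{align*}
\Phi(B)^{p}+\Phi(C)^{p}&\leq \Phi(A^{p})+\Phi(D^{p})\\
&\quad-\tfrac{M-\Phi(B)}{M-m}(\Phi(B)-m)^{p}-\tfrac{\Phi(B)-m}{M-m}(M-\Phi(B))^{p}\\
&\quad-\tfrac{M-\Phi(C)}{M-m}(\Phi(C)-m)^{p}-\tfrac{\Phi(C)-m}{M-m}(M-\Phi(C))^{p}\\
&\quad-\Phi((m-A)^{p})-\tfrac{m-\Phi(A)}{M-m}(M-m)^{p}\\
&\quad-\Phi((D-M)^{p})-\tfrac{\Phi(D)-M}{M-m}(M-m)^{p}.
\end{align*}
The final step is cosmetic: move the four terms involving $(\Phi(B)-m)^{p}$, $(M-\Phi(B))^{p}$, $(\Phi(C)-m)^{p}$, $(M-\Phi(C))^{p}$ from the right-hand side across to the left, producing exactly the inequality stated in the corollary.

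There is really no genuine obstacle: the only thing to watch is that the positivity bounds on the operators $m-A$, $D-M$, $\Phi(B)-m$, $M-\Phi(B)$, $\Phi(C)-m$, $M-\Phi(C)$ guarantee that applying functional calculus to the inequality for the superquadratic function $t^{p}$ yields bona fide operator inequalities, and that $\Phi$ commutes through the relevant terms by linearity (so that $\Phi(f(m-A))=\Phi((m-A)^{p})$ and $\Phi(f(D-M))=\Phi((D-M)^{p})$). With those checks in place the corollary follows from Theorem \ref{main1} with no further work.
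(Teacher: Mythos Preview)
Your proposal is correct and follows precisely the paper's own route: the corollary is stated immediately after the remark that $f(t)=t^{p}$ is superquadratic for $p\geq 2$, and the paper obtains it simply by ``applying the theorem to such a class of functions,'' i.e., by substituting $f(t)=t^{p}$ into \eqref{so1} and rearranging. Your extra verification that all the arguments $m-A$, $D-M$, $\Phi(B)-m$, etc., are non-negative (so that the functional calculus for $t\mapsto t^{p}$ is legitimate) is a welcome bit of care, and you have correctly interpreted the stray $(M-t)^{p}$ terms in the displayed statement as the intended $(M-\Phi(B))^{p}$ and $(M-\Phi(C))^{p}$.
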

\begin{remark}
Suppose that the  assumptions as in  Theorem \ref{main1} are fulfilled. Then, similarly  to Remark \ref{slicno} we also obtain the following two versions of Theorem \ref{main1}:
{\small\begin{align*}
&\Phi(f(B))+\Phi(f(C))\\&\leq
f(\Phi(A))+f(\Phi(D))-\frac{M-\Phi(B)}{M-m}\Phi(f(B)-m)-{\frac{\Phi(B)-m}{M-m}}\Phi(f(M-B))\\&\quad
 -\frac{M-\Phi(C)}{M-m}\Phi(f(C)-m)-{\frac{\Phi(C)-m}{M-m}}f(M-\Phi(C))-f(m-\Phi(A))
 \\&\quad-{\frac{m-\Phi(A)}{M-m}}f(M-m)-f(\Phi(D)-M)-{\frac{\Phi(D)-M}{M-m}}f(M-m)
\end{align*}}
and
{\small\begin{align}\label{so3}
&f(\Phi(B))+\Phi(f(C))\nonumber\\&\leq
\Phi(f(A))+f(\Phi(D))-\frac{M-\Phi(B)}{M-m}f(\Phi(B)-m)-{\frac{\Phi(B)-m}{M-m}}f(M-\Phi(B))\nonumber\\&\quad
 -\frac{M-\Phi(C)}{M-m}\Phi(f(C)-m)-{\frac{\Phi(C)-m}{M-m}}f(M-\Phi(C))-\Phi(f(m-A))\nonumber\\&\quad-{\frac{m-\Phi(A)}{M-m}}f(M-m)
 -f(\Phi(D)-M)-{\frac{\Phi(D)-M}{M-m}}f(M-m).
\end{align}}
\end{remark}

Now, our goal is  to establish improved version of the Jensen-Mercer inequality \eqref{jenmer} based on superquadraticity. To do this, we first give  multidimensional versions of  inequalities \eqref{so1} and \eqref{so3}.
Namely, following the lines of the proof of Theorem \ref{main1}, we obtain the following result.

\begin{corollary}\label{koro_merc}
Let $f:[0,\infty)\rightarrow \mathbb{R}$ be  continuous superquadratic function. Let $A_i, B_i, C_i, D_i\in {\mathbb B}({\mathscr H})_h$ be  such that $A_i + D_i = B_i + C_i$ and $0\leq A_i \leq m \leq B_i, C_i \leq M \leq D_i$, $i=1,2,\ldots ,n$, for  non-negative real numbers $m < M$.  If $\Phi_1,\Phi_2,\cdots,\Phi_n$ are positive linear maps on ${\mathbb B}({\mathscr H})$ satisfying
$\sum_{i=1}^{n}\Phi_i(I)=I$, then
{\footnotesize\begin{align*}
&f\left(\sum_{i=1}^{n}\Phi_i(B_i)\right)+f\left(\sum_{i=1}^{n}\Phi_i(C_i)\right)\\&\quad
+\frac{M-\sum_{i=1}^{n}\Phi_i(B_i)}{M-m}f\left(\sum_{i=1}^{n}\Phi_i(B_i)-m\right)
+{\frac{\sum_{i=1}^{n}\Phi_i(B_i)-m}{M-m}}f\left(M-\sum_{i=1}^{n}\Phi_i(B_i)\right)\\&\quad
 +\frac{M-\sum_{i=1}^{n}\Phi_i(C_i)}{M-m}f\left(\sum_{i=1}^{n}\Phi_i(C_i)-m\right)+
 {\frac{\sum_{i=1}^{n}\Phi_i(C_i)-m}{M-m}}f\left(M-\sum_{i=1}^{n}\Phi_i(C_i)\right)
\\&\leq
\sum_{i=1}^{n}\Phi_i(f(A_i))+\sum_{i=1}^{n}\Phi_i(f(D_i))
 -\sum_{i=1}^{n}\Phi_i(f(m-A_i))-{\frac{m-\sum_{i=1}^{n}\Phi_i(A_i)}{M-m}}f(M-m)
 \\&\quad-\sum_{i=1}^{n}\Phi_i(f(D_i-M))-{\frac{\sum_{i=1}^{n}\Phi_i(D_i)-M}{M-m}}f(M-m)
\end{align*}}
and
{\footnotesize\begin{align*}
&\sum_{i=1}^{n}\Phi_i(f\left(B_i)\right)+f\left(\sum_{i=1}^{n}\Phi_i(C_i)\right)\nonumber\\&\quad+
\frac{M-\sum_{i=1}^{n}\Phi_i(B_i)}{M-m}\left(\sum_{i=1}^{n}\Phi_i(f(B_i))-m\right)
+{\frac{\sum_{i=1}^{n}\Phi_i(B_i)-m}{M-m}}\left(M-\sum_{i=1}^{n}\Phi_i(f(B_i))\right)\nonumber\\&\quad
 +\frac{M-\sum_{i=1}^{n}\Phi_i(C_i)}{M-m}f\left(\sum_{i=1}^{n}\Phi_i(C_i)-m\right)
 +{\frac{\sum_{i=1}^{n}\Phi_i(C_i)-m}{M-m}}f\left(M-\sum_{i=1}^{n}\Phi_i(C_i)\right)\nonumber\\&\leq
f\left(\sum_{i=1}^{n}\Phi_i(A_i)\right)+\sum_{i=1}^{n}\Phi_i(f(D_i))
-f\left(m-\sum_{i=1}^{n}\Phi_i(A_i)\right)\nonumber\\&\quad-{\frac{m-\sum_{i=1}^{n}\Phi_i(A_i)}{M-m}}f(M-m)
  -\sum_{i=1}^{n}\Phi_i(f\left(D_i-M)\right)-{\frac{\sum_{i=1}^{n}\Phi_i(D_i)-M}{M-m}}f(M-m).
\end{align*}}
\end{corollary}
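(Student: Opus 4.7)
The proof will essentially be a multidimensional adaptation of the argument for Theorem \ref{main1}, where the single positive linear map $\Phi$ is replaced by the normalized tuple $\Phi_1,\ldots,\Phi_n$, and where the choice of whether to apply functional calculus before or after summing determines which of the two inequalities (the ``sum version'' or the ``mixed version'') one obtains. The starting point is the same three scalar estimates derived in the proof of Theorem \ref{main1} from the superquadratic characterization \eqref{super2}: for $t\in[m,M]$ one has
\begin{align*}
f(t)\leq L(t)-\tfrac{M-t}{M-m}f(t-m)-\tfrac{t-m}{M-m}f(M-t),
\end{align*}
while for $t<m$ and $t>M$ the reversed estimates with correction terms $f(m-t),f(t-M)$ and the affine part $L(t)+\tfrac{|t-m \wedge t-M|}{M-m}f(M-m)$ hold, with $L(t)=\tfrac{M-t}{M-m}f(m)+\tfrac{t-m}{M-m}f(M)$.

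For the first inequality, I would apply the spectral theorem to the self-adjoint operators $\sum_{i=1}^n\Phi_i(B_i)$ and $\sum_{i=1}^n\Phi_i(C_i)$, both of which have spectra in $[m,M]$ because $m\leq B_i,C_i\leq M$ and $\sum_i\Phi_i(I)=I$. This yields estimates for $f(\sum_i\Phi_i(B_i))$ and $f(\sum_i\Phi_i(C_i))$ with the correction terms appearing at the level of the sum. For $A_i\leq m$ and $D_i\geq M$, I apply the reversed scalar inequalities pointwise, then apply $\Phi_i$ and sum over $i$; here the correction terms appear inside $\sum_i\Phi_i(\cdot)$, producing $\sum_i\Phi_i(f(m-A_i))$ and $\sum_i\Phi_i(f(D_i-M))$. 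The affine residual from $L$ collapses nicely because $L$ is affine, so $\sum_i\Phi_i(L(A_i))=L(\sum_i\Phi_i(A_i))$, and similarly for $D_i$; combining this with $\sum_i\Phi_i(B_i+C_i)=\sum_i\Phi_i(A_i+D_i)$ from the hypothesis $A_i+D_i=B_i+C_i$ makes all affine terms on both sides cancel against each other, leaving exactly the asserted inequality.

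For the second (mixed) inequality, I would instead apply functional calculus to each $B_i$ individually (each has spectrum in $[m,M]$), then apply $\Phi_i$ and sum, producing $\sum_i\Phi_i(f(B_i))$ on the left along with $\Phi_i$-averaged correction terms. The $C$-term is handled as before at the level of the sum. On the other side, the $A$-term is treated at the level of the sum (using $\sum_i\Phi_i(A_i)\leq m$) and the $D$-term is handled inside each $\Phi_i$ (using $D_i\geq M$). Again, the affine parts match via linearity and the equality $A_i+D_i=B_i+C_i$, producing the stated inequality.

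The main obstacle is purely bookkeeping: one must keep meticulous track of which terms live inside $\Phi_i$ and which live outside, since the affine cancellation relies on $L$ commuting with both $\Phi_i$ and summation, while the nonlinear correction terms do not. A secondary subtlety is ensuring that the spectra of the various combined operators actually lie in the intervals where each scalar inequality is valid; this is automatic for $\sum_i\Phi_i(B_i),\sum_i\Phi_i(C_i)\in[m,M]$ and $\sum_i\Phi_i(A_i)\leq m$, $\sum_i\Phi_i(D_i)\geq M$, but is worth stating explicitly so that the application of spectral calculus to the relevant scalar inequality is justified. Beyond these accounting issues the proof is a direct transcription of the argument in Theorem \ref{main1}, so no new idea is required.
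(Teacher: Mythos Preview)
Your proposal is correct and follows exactly the route indicated by the paper, which simply states that the result is obtained ``following the lines of the proof of Theorem \ref{main1}'' without providing further detail. Your description of which operators are treated at the level of the sum versus inside each $\Phi_i$, together with the affine cancellation via $A_i+D_i=B_i+C_i$ and linearity of $L$, is precisely the bookkeeping the paper leaves to the reader.
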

\begin{remark}
Suppose that  $B_i\in {\mathbb B}({\mathscr H})_h$, $i=1,2,\ldots , n$, are  positive operators with spectra contained in $[m,M]\subseteq [0,\infty)$, and let
$\Phi_i$, $i=1,2,\ldots , n$,  be
positive linear maps on ${\mathbb B}({\mathscr H})$ such that $\sum_{i=1}^n\Phi_i(I)=I$. Then, Corollary \ref{koro_merc} provides refinement of the Jensen-Mercer inequality for
 continuous non-negative superquadratic function $f:[0,\infty)\rightarrow \mathbb{R}$. To see this,  define $C_i=M+m-B_i$ $i=1,2,\ldots , n$. Obviously, $m\leq C_i\leq M$ and $B_i+C_i=M+m$. Now,
  setting  $A_i=mI$ and $D_i=MI$, $i=1,2,\ldots , n$, the second inequality in Corollary \ref{koro_merc} reduces to
{\footnotesize\begin{align*}
&f\left(M+m-\sum_{i=1}^{n}\Phi_i(B_i)\right)\\&\quad+
\frac{M-\sum_{i=1}^{n}\Phi_i(B_i)}{M-m}\left(\sum_{i=1}^{n}\Phi_i(f(B_i))-m\right)
+{\frac{\sum_{i=1}^{n}\Phi_i(B_i)-m}{M-m}}\left(M-\sum_{i=1}^{n}\Phi_i(f(B_i))\right)\\&\quad
 +\frac{\sum_{i=1}^{n}\Phi_i(B_i)-m}{M-m}f\left(M-\sum_{i=1}^{n}\Phi_i(B_i)\right)
 +{\frac{M-\sum_{i=1}^{n}\Phi_i(B_i)}{M-m}}f\left(\sum_{i=1}^{n}\Phi_i(B_i)-m\right)\\&\leq
f\left(m\right)+f(M)-\sum_{i=1}^{n}\Phi_i(f\left(B_i)\right)-2f(0),
\end{align*}}which represents more accurate form of the Jensen-Mercer inequality \eqref{jenmer} in the case of a non-negative function $f$.
\end{remark}

If $\Phi$ is an identity map, then the condition $A+D=B+C$ in Theorem \ref{main1} can be relaxed, as in Theorem \ref{main111} from the previous section. In fact, the following result is an analogue of
Theorem \ref{main111} for superquadratic functions.

\begin{theorem}\label{main11}
Let $f: [0,\infty)\rightarrow \mathbb{R}$ be  continuous superquadratic function and let $A, B, C, D\in{\mathbb B}({\mathscr H})_h$ be  such that  $0\leq A \leq m \leq B, C \leq M \leq D$, for  non-negative real numbers $0\leq m < M$.
If one of the following conditions
\begin{align*}
&({\rm i})\,\, B + C \leq A +D\quad \textrm{and}\quad f(m)\leq f (M)
\\&({\rm ii})\, A + D \leq B +C \quad\textrm{and}\quad f(M) \leq f (m)
\end{align*}
holds, then
\begin{align*}
f(B)&+f(C)+{\frac{B-m}{M-m}}f(M-B)
+\frac{M-C}{M-m}f(C-m)\\&\quad+{\frac{C-m}{M-m}}f(M-C)+\frac{M-B}{M-m}f(B-m)\\&
\leq f(A)+f(D)-f(m-A)-{\frac{m-A}{M-m}}f(M-m)\\&\quad-f(D-M)-{\frac{D-M}{M-m}}f(M-m).
\end{align*}
\end{theorem}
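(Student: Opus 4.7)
The plan is to mimic the structure of the proof of Theorem \ref{main111}, but using the three scalar inequalities extracted from the superquadratic characterization \eqref{super2} that were already isolated in the proof of Theorem \ref{main1}. Concretely, for $t\in[m,M]$ we have
\[
f(t)\leq \tfrac{M-t}{M-m}f(m)+\tfrac{t-m}{M-m}f(M)-\tfrac{M-t}{M-m}f(t-m)-\tfrac{t-m}{M-m}f(M-t),
\]
while for $t<m$ and $t>M$ the analogous relations hold with reversed inequality and with the extra nonnegative remainder terms $f(m-t)+\tfrac{m-t}{M-m}f(M-m)$ and $f(t-M)+\tfrac{t-M}{M-m}f(M-m)$, respectively. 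All three of these are already established in the proof of Theorem \ref{main1}, so I would simply quote them.

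Next, since $B,C\in[m,M]$ and $A\leq m$, $D\geq M$ in the operator order, I would apply the continuous functional calculus (with $\Phi$ replaced by the identity map, which is what makes the hypothesis $A+D=B+C$ unnecessary) to these scalar relations to obtain four operator inequalities bounding $f(B)$, $f(C)$, $f(A)$, $f(D)$. Adding the $f(B)$ and $f(C)$ estimates, and rearranging to move the remainder terms to the left, yields
\[
f(B)+f(C)+\tfrac{M-B}{M-m}f(B-m)+\tfrac{B-m}{M-m}f(M-B)+\tfrac{M-C}{M-m}f(C-m)+\tfrac{C-m}{M-m}f(M-C)
\]
\[
\leq \tfrac{f(M)-f(m)}{M-m}(B+C)+2\tfrac{f(m)M-f(M)m}{M-m}.
\]
Adding the $f(A)$ and $f(D)$ estimates and isolating the same affine expression gives
\[
\tfrac{f(M)-f(m)}{M-m}(A+D)+2\tfrac{f(m)M-f(M)m}{M-m}\leq f(A)+f(D)-f(m-A)-\tfrac{m-A}{M-m}f(M-m)-f(D-M)-\tfrac{D-M}{M-m}f(M-m).
\]

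The bridge between the two displays is exactly the expression $\tfrac{f(M)-f(m)}{M-m}(B+C)\leq \tfrac{f(M)-f(m)}{M-m}(A+D)$, which is where the dichotomy is used: under hypothesis (i) the scalar $\tfrac{f(M)-f(m)}{M-m}$ is nonnegative and $B+C\leq A+D$, while under hypothesis (ii) the scalar is nonpositive and the order on $B+C$ versus $A+D$ is reversed. Either way the inequality propagates (noting that the operators commute with scalars, so the operator inequality holds entrywise in the functional-calculus sense). Chaining the three displays produces the desired estimate.

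The one minor obstacle is careful bookkeeping of signs when rearranging the remainder terms from the right-hand side of the $f(A)$, $f(D)$ inequalities to the right-hand side of the final relation, together with verification that the transition step remains valid under hypothesis (ii) where $f(M)-f(m)\leq 0$; this is routine but must be checked. No new ingredients beyond the three scalar inequalities from the proof of Theorem \ref{main1} and the sign analysis that already appears in Theorem \ref{main111} are required.
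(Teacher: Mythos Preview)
Your proposal is correct and follows essentially the same approach as the paper: quote the three scalar inequalities from the proof of Theorem \ref{main1}, pass to operators via the functional calculus to get estimates for $f(B),f(C),f(A),f(D)$, sum the first two, use the sign of $\tfrac{f(M)-f(m)}{M-m}$ together with hypothesis (i) or (ii) to bridge to $A+D$, and then invoke the last two estimates. The paper's proof is exactly this, with the same bridge step $\tfrac{f(M)-f(m)}{M-m}(B+C)\leq \tfrac{f(M)-f(m)}{M-m}(A+D)$ you identified.
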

\begin{proof}
Since $A \leq m \leq B, C \leq M \leq D$,  utilizing the scalar inequalities derived in the proof of Theorem \ref{main1}, we have
\begin{align*}
f(B)&\leq \frac{f(M)-f(m)}{M-m}B+\frac{f(m)M-f(M)m}{M-m}\nonumber\\&\quad-\frac{M-B}{M-m}f(B-m)-{\frac{B-m}{M-m}}f(M-B)
\end{align*}
\begin{align*}
f(C)&\leq \frac{f(M)-f(m)}{M-m}C+\frac{f(m)M-f(M)m}{M-m}\nonumber\\&\quad-\frac{M-C}{M-m}f(C-m)-{\frac{C-m}{M-m}}f(M-C),
\end{align*}
\begin{align*}
f(A)&\geq \frac{f(M)-f(m)}{M-m}A+\frac{f(m)M-f(M)m}{M-m}\nonumber\\&\quad+f(m-A)+{\frac{m-A}{M-m}}f(M-m)
\end{align*}
and
\begin{align*}
f(D)&\geq \frac{f(M)-f(m)}{M-m}D+\frac{f(m)M-f(M)m}{M-m}\nonumber\\&\quad+f(D-M)+{\frac{M-D}{M-m}}f(M-m).
\end{align*}
Now, summing the estimates for $f(B)$ and $f(C)$, and taking into account that $\frac{f(M)-f(m)}{M-m}(B+C)\leq \frac{f(M)-f(m)}{M-m}(A+D)$, it follows that
\begin{align*}
f(B)&+f(C)\\&
\leq\frac{f(M)-f(m)}{M-m}(A+D)+2\frac{f(m)M-f(M)m}{M-m}-\frac{M-B}{M-m}f(B-m)\\&\quad-{\frac{B-m}{M-m}}f(M-B)
-\frac{M-C}{M-m}f(C-m)-{\frac{C-m}{M-m}}f(M-C).
\end{align*}
Finally, the result follows by taking into account estimates for $f(A)$ and $f(D)$.
\end{proof}

\begin{remark}
If $f: [0,\infty)\rightarrow \mathbb{R}$ is a continuous superquadratic function and  $(A,D)\in\Omega$, where $\Omega$ is defined in Remark \ref{midpoint1},
then Theorem \ref{main11} implies the inequality
\begin{align*}
&f\Big(\frac{A+D}{2}\Big)\\&\quad +{\frac{A+D-2m}{2(M-m)}}f\Big(M-\frac{A+D}{2}\Big)
+\frac{2M-A-D}{2(M-m)}f\Big(\frac{A+D}{2}-m\Big)\\
\leq &\quad \frac{1}{2}f(A)+\frac{1}{2}f(D)\\
&\quad-\frac{1}{2}f(m-A)-\frac{1}{2}f(D-M)-{\frac{m-A+D-M}{2(M-m)}}f(M-m).
\end{align*}
\end{remark}

\begin{remark}
It should be noticed here that some related  Jensen-type operator inequalities for superquadratic functions have been studied in \cite{bac} and \cite{kian}.
\end{remark}



\begin{thebibliography}{99}


\bibitem{AJS} S. Abramovich, G. Jameson,  G. Sinnamon, \textit{Refining
of Jensen's inequality, }Bull. Math. Soc. Sci.\ Math. Roumanie
(N.S.) \textbf{47} (2004), 3--14.

\bibitem{AJS1} S. Abramovich, G. Jameson,  G. Sinnamon, \textit{%
Inequalities for averages of convex and superquadratic functions,
}J.
Inequal. Pure Appl. Math.\textit{\ } \textbf{7} (2004)\textbf{, }%
Art. 70.

\bibitem{bakh} M. Bakherad, M. Krni\'{c}, M.S. Moslehian, \textit{Reverses of the Young inequality for matrices and operators},
 Rocky Mountain J. Math. \textbf{46} (2016), 1089--1105.

\bibitem{bac} J. Bari\' c, A. Matkovi\' c, J. Pe\v cari\' c,  \textit{A variant of the Jensen-Mercer operator inequality for superquadratic functions},
 Math. Comput. Modelling \textbf{51} (2010), 1230--1239.

\bibitem{abc} T. Furuta, J. Mi\' ci\' c Hot, J. Pe\v cari\' c, Y. Seo, {\it
Mond-Pe\v cari\' c Method in Operator Inequalities}, Element,
Zagreb, 2005.






\bibitem{kian}  M. Kian, \textit{Operator Jensen inequality for superquadratic functions},
Linear Algebra Appl.   \textbf{456}    (2014),   82--87.

\bibitem{analele} M. Krni\'c, N. Lovri\v cevi\'c, J. Pe\v cari\'
c, {\it Jessen's functional, its properties and applications}, An.
\c{S}t. Univ. Ovidius Constan\c{t}a {\bf 20} (2012), 225--248.

\bibitem{mat}  A. Matkovi\'{c}, J. Pe\v{c}ari\'{c}, I. Peri\'{c}, \textit{A variant of Jensen's inequality of Mercer's type for operators with applications},
Linear Algebra Appl. \textbf{418} (2006), 551--564.


\bibitem{mos} M.S. Moslehian, J. Mi\'{c}i\'{c}, M. Kian, \textit{An operator inequality and its consequences},
Linear Algebra Appl. \textbf{439} (2013), 584--591.

\bibitem{mos2} M.S. Moslehian, J. Mi\'{c}i\'{c}, M. Kian, \textit{Operator inequalities of Jensen type}, Top. Algebra Appl. \textbf{1} (2013), 9--21.

\bibitem{sab1} M. Sababheh,  \textit{Means refinements via convexity}, Mediterr. J. Math.  (2017) 14:125. https://doi.org/10.1007/s00009-017-0924-8.

\end{thebibliography}
\end{document}